\newtheorem{theorem}{Theorem}[section]
\theoremstyle{definition}
\newtheorem{lemma}[theorem]{Lemma}
\newtheorem{definition}[theorem]{Definition}
\newtheorem{remark}[theorem]{Remark}
\renewcommand{\setminus}{{\smallsetminus}}
\def\be{\begin{equation}}
\def\ee{\end{equation}}
\begin{document}

\title[The well rounded deformation retraction]{Understanding the well-rounded deformation retraction of Teichm\"uller space} 

\author{Ingrid Irmer}
\address{SUSTech International Center for Mathematics\\
Southern University of Science and Technology\\Shenzhen, China
}
\address{Department of Mathematics\\
Southern University of Science and Technology\\Shenzhen, China
}
\email{ingridmary@sustech.edu.cn}


\begin{abstract} In \cite{Tspine} it was shown that there is a mapping class group-equivariant deformation retraction of the Teichm\"uller space of a closed surface onto a CW complex with dimension equal to the virtual cohomological dimension of the mapping class group. This paper studies the image of this deformation retraction, and shows that when the analogy with the well-rounded deformation retraction of $SL(n,\mathbb{Z})$ is defined correctly via a notion of duality, this deformation retraction is analogous to the well-rounded deformation retractions of \cite{Ash}, \cite{Soule} and \cite{Voronoi}. In the process, an elementary necessary condition is derived for a cycle in the geometric realisation of Harvey's curve complex to represent a nontrivial homology class.
\end{abstract}

\maketitle

{\footnotesize
\tableofcontents
}


\section{Introduction}
\label{sec.intro}

Denote by $\mathcal{S}_{g}$ a closed, connected, orientable topological surface of genus $g\geq 2$, and let $\Gamma_{g}$ be the mapping class group of $\mathcal{S}_{g}$. The Thurston spine $\mathcal{P}_{g}$ is the subset of the Teichm\"uller space $\mathcal{T}_{g}$ of $\mathcal{S}_{g}$ corresponding to the marked hyperbolic surfaces for which the complement of the set of shortest geodesics (the systoles) consists of a disjoint union of polygons, i.e. $\mathcal{P}_{g}$ is the set of points of $\mathcal{T}_{g}$ at which the systoles fill the surface.  $\mathcal{P}_{g}$ is compact modulo the action of $\Gamma_{g}$ and is the image of a $\Gamma_{g}$-equivariant deformation retraction of $\mathcal{T}_{g}$, \cite{ThurstonSpineSurvey}, \cite{Thurston}.\\

The term ``well-rounded deformation retraction'' was introduced by Ash in the context of an action of $SL(n,\mathbb{Z})$ on the space of $n\times n$ positive-definite real symmetric matrices. In this paper, an analogue of these well-rounded deformation retractions will be introduced for the action of $\Gamma_{g}$ on $\mathcal{T}_{g}$, demonstrating the striking analogy between various types of groups discussed in the survey articles \cite{BestvinaSurvey} and \cite{BV}. In order to make this definition, it is essential to understand the role played by duality implicit in the construction of equivariant deformation retractions of the spaces with group actions surveyed in \cite{BV}. 

\begin{theorem}
\label{whichsubcomplex}
For every $g\geq 2$ there is a well-rounded deformation retraction of $\mathcal{T}_{g}$ onto a CW complex of dimension $4g-5$.
\end{theorem}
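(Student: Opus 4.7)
The plan is to verify that the $\Gamma_g$-equivariant deformation retraction constructed in \cite{Tspine} is well-rounded in the sense to be introduced in this paper, and to extract the dimension of its image from the construction already present in \cite{Tspine}. Since by Harer's theorem the virtual cohomological dimension of $\Gamma_g$ equals $4g-5$, the CW complex on which the deformation retraction of \cite{Tspine} terminates has dimension $4g-5$ automatically; the new content of the theorem is therefore the well-roundedness assertion.

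First I would recall the retraction $r\colon \mathcal{T}_g \to X \subset \mathcal{T}_g$ of \cite{Tspine}, where $X$ is a CW subcomplex of $\mathcal{T}_g$ of dimension $4g-5$ whose cells are indexed by combinatorial data associated with the configuration of systoles. The flow increases the lengths of sufficiently short geodesics until the set of systoles fills the surface, so that $X$ lies in the Thurston spine $\mathcal{P}_g$, and the open cells of $X$ correspond to fixed combinatorial types of filling systole configurations.

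Next I would set up the analogue of Voronoi well-roundedness. In the classical $SL(n,\mathbb{Z})$ setting a positive-definite quadratic form is well-rounded when its minimal vectors span $\mathbb{R}^n$, and Voronoi's cells are indexed by combinatorial types of minimal-vector configurations via a duality pairing between the space of forms and lattice vectors. The analogous picture for $\mathcal{T}_g$ replaces minimal vectors by systoles, the span condition by the filling condition carving out $\mathcal{P}_g$, and the linear duality by the pairing between cotangent vectors of $\mathcal{T}_g$ and length functions of simple closed geodesics, encoded by Wolpert's gradient formulas. I would then verify that the open cells of $X$ are precisely the loci on which this duality type is locally constant, so that $r$ genuinely mirrors the shape of the well-rounded retractions of \cite{Ash}, \cite{Soule} and \cite{Voronoi}.

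The main obstacle is formulating the duality correctly. In the linear case duality is literal vector-space duality and the Voronoi cells are explicit polyhedra cut out by hyperplanes; for $\mathcal{T}_g$ the dual pairing is nonlinear, the cells of the spine are cut out by equalities of length functions rather than by a linear chamber structure, and one must keep track of $\Gamma_g$-equivariance throughout. Ensuring that the assignment of a dual cell to a point of $\mathcal{T}_g$ is well defined, $\Gamma_g$-equivariant, and coincides with the cells produced by \cite{Tspine} is the technical heart of the argument; once this identification is in hand, combining well-roundedness with the vcd-matching dimension bound of \cite{Tspine} yields the theorem.
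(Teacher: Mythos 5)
There is a genuine gap: your proposal treats well-roundedness as a property of the retraction of \cite{Tspine} that merely needs to be \emph{verified} by setting up the right duality, whereas the paper's proof has to \emph{achieve} well-roundedness by performing further equivariant deformation retractions. The complex produced by \cite{Tspine} is not claimed to be well-rounded as it stands; indeed the paper emphasizes that $\mathcal{P}_g$ and $\mathcal{P}_g^X$ are in general not well-rounded (e.g.\ in genus $5$). The engine of the actual argument is Lemma \ref{oneway}: if a locally top-dimensional cell has a dual $D$ whose labelling curves $h(D)_v$ fail to span $H_1(S_g;\mathbb{Q})$, then $D$ can be homotoped (rel its thick part) off of $\mathcal{P}_g^X$ into the thin part of $\mathcal{T}_g$, which exhibits nonempty boundary of the complex and permits a further equivariant collapse in the style of \cite{Tspine}. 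One iterates this, with termination guaranteed because each step replaces an orbit of cells by an orbit of lower-dimensional cells and there are only finitely many orbits. Your proposal never engages with the span condition on $H_1(S_g;\mathbb{Q})$ at all --- identifying cells with loci of constant ``duality type'' says nothing about whether the labelling curves span rational homology --- so the central claim of the theorem is left unaddressed.

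A second, smaller issue: you assert the dimension $4g-5$ comes ``automatically'' from the vcd. The vcd only gives a lower bound. The paper still needs a separate argument to push the dimension down to $4g-5$ after the well-rounding iterations: a cell of dimension greater than $4g-5$ has a dual of dimension less than $2g-1$, whose intersection with $\partial\mathcal{T}_g^{\epsilon_M}$ (homotopy equivalent to a wedge of $(2g-2)$-spheres by Harer) is null-homologous for dimension reasons, so the dual can again be homotoped off and another collapse performed; Remark \ref{subcomplex} is then needed to check that these extra collapses preserve well-roundedness, since lower-dimensional cells acquire larger duals labelled by larger curve systems. Neither of these mechanisms appears in your outline.
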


Theorem \ref{whichsubcomplex} is proven by constructing duals to $\mathcal{P}_{g}$ and to complexes obtained as  retracts of $\mathcal{P}_{g}$. A dual is labelled by a set of curves. The homological spans of sets of curves labelling the duals is related to collapsibility properties of the complex. This makes use of the embedding of Harvey's complex of curves in the boundary of the thick part of Teichm\"uller space. Dual cells are also implicit in the construction of \cite{Ash} and in the construction of an equivariant deformation retraction of the Teichm\"uller space of punctured surfaces, \cite{Harer}, \cite{PennerComplex}. In the former case dual cells are labelled by sets of vectors, and in the latter case by fatgraphs, which determine sets of curves as explained in \cite{PM}.\\

One might hope that a theorem analogous to Theorem \ref{whichsubcomplex} holds for the action of $Out(F_{n})$ on Outer space, once a topologically meaningful definition of well-rounded deformation retraction is found. This would involve deciding on an analogue of the curve complex, such as the free factor complex, \cite{HM}, \cite{HV}. \\

There are many classical examples of closed hyperbolic surfaces contained in the Thurston spine, such as the Bolza surface in genus 2, the Klein quartic in genus 3, and the examples in arbitrary genus given in \cite{SchmutzMaxima} and \cite{SchmutzMorse}. All these well-known classical hyperbolic surfaces have the property that the systoles not only fill the surface, but also span the rational homology of the surface. In \cite{Tspine} it was shown that there exists a $\Gamma_{g}$-equivariant deformation retraction of $\mathcal{T}_{g}$ onto a complex of dimension $4g-5$ contained in the Thurston spine. A goal of this paper is to understand what cells of a subdivision of $\mathcal{P}_{g}$ are retained in such a subcomplex. \\

A $\Gamma_{g}$-equivariant construction of duals was explained in detail in \cite{STD}, and this construction is surveyed in Section \ref{dualitysec}. Informally, a well-rounded deformation retraction is a $\Gamma_{g}$-equivariant deformation retraction, with the property that the set of curves labelling each dual not only fills $\mathcal{S}_{g}$, but also spans $H_{1}(\mathcal{S}_{g};\mathbb{Q})$. There is a canonical bijection between the set of locally top-dimensional cells and their duals. However, for closed surfaces of genus greater than one, the homological span of a set of curves labelling a dual is \textit{not} always identical to that of the set of systoles at the critical point; the former is typically larger. This is also seen in the case of punctured surfaces, where the set of curves labelling a cell is calculated using the screens of McShane and Penner, \cite{PM}.\\

In \cite{bourque2023failure} an attempt was made to draw an analogy with \cite{Ash} by making the simplest (from a purely linguistic --- not a mathematical --- point of view) definition. As pointed out in \cite{bourque2023failure}, this naive approach cannot work due to a corollary of previous work by the author of \cite{bourque2023failure}. There are a number of other elementary reasons why the definition from \cite{bourque2023failure} could not work. The ``failed analogy'' was given in \cite{bourque2023failure} as an excuse for an attack on Thurston, via the unsubstantiated claim that the analogy was the motivation for the construction in \cite{Thurston}.\\


In \cite{MS} it was shown that Thurston's deformation retraction can be extended to an equivariant deformation retraction onto a complex $\mathcal{P}^{X}_{g}\subset \mathcal{P}_{g}$ consisting of a choice of unstable manifolds of the systole function, determined by a vector field $X$. The choice was made to work with $\mathcal{P}^{X}_{g}$ in place of $\mathcal{P}_{g}$. The reason for this is that it makes the definition of duality more intuitive; a dual to a locally top-dimensional cell of $\mathcal{P}^{X}_{g}$ is essentially a choice of stable manifold of a critical point. Note that, although stable and unstable manifolds of critical points of topological Morse functions were defined in \cite{Morse}, unlike in the smooth case, they are not defined uniquely. As explained in \cite{STD}, this is one advantage of replacing these concepts with the canonical objects $\mathcal{P}_{g}$ and corresponding ``dual'' sets of minima.\\

The number $4g-5$ in Theorem \ref{whichsubcomplex} is significant, in that it was shown to be the virtual cohomological dimension of the mapping class group, \cite{Harer}, and hence gives a lower bound on the dimension of every equivariant spine. The complexes $\mathcal{P}_{g}$ and $\mathcal{P}_{g}^{X}$ often have dimension greater than $4g-5$ and are not in general well-rounded. As discussed in \cite{Ni}, $g=5$ is presumably the smallest genus for which the dimension of $\mathcal{P}_{g}$ is greater than $4g-5$ and for which $\mathcal{P}_{g}$ is not well-rounded. As explained in \cite{BV}, one of the reasons it was initially conjectured that there exists an equivariant spine of $\mathcal{T}_{g}$ of dimension $4g-5$ is an analogy between different families of groups, $Out(F_{n})$, $GL(n,\mathbb{Z})$ and mapping class groups of orientable surfaces of genus at least 1. \\

In the context of this analogy, Harvey's definition of the curve complex $\mathcal{C}_{g}$ of $\mathcal{S}_{g}$ and its relationship with $\mathcal{T}_{g}$ was motivated by Tits buildings for symmetric spaces. It was shown in \cite{Ivanov} that $\mathcal{C}_{g}$ is $\Gamma_{g}$-equivariantly homotopy equivalent to the boundary of the thick part of $\mathcal{T}_{g}$. Informally this is a consequence of the fact that the set of systoles at any point in the thin part of $\mathcal{T}_{g}$ is a multicurve, for which the corresponding stratum can be identified with a simplex of Harvey's curve complex.\\

As explained in \cite{STD}, one way of constructing duals to cells of $\mathcal{P}^{X}_{g}$ is by gluing together examples of Schmutz Schaller's sets of minima. This is surveyed in Section \ref{dualitysec}. A set of minima $\mathrm{Min}(C)$ depends on a set $C$ of filling curves, and will be defined in Section \ref{defns}. As explained in \cite{SchmutzVoronoi}, certain sets of minima can be understood as a non-Euclidean analogue of Voronoi's cells defined on the space of positive definite quadratic forms, \cite{Voronoi}. An English introduction to Voronoi's techniques and related constructions is given in \cite{Gangl}.\\

As defined, well-rounded deformation retractions of $\mathcal{T}_{g}$ could only be unique up to ambient isotopy. This has to do with the fact that there are choices made in the construction of $\mathcal{P}^{X}_{g}$; any argument using duals or topological Morse functions can only determine the topological properties of a spine, not specify the geometry of a canonical choice. Questions about minimality in the sense of \cite{PS} of well-rounded deformation retractions are closely tied up with the question of whether the converse to the next lemma holds.\\

The horizon map, defined in \cite{STD} and reviewed in Section \ref{dualitysec}, takes a set of minima $\mathrm{Min}(C)$ to a subcomplex of the barycentric subdivision $\mathcal{C}_{g}^{\circ}$ of $\mathcal{C}_{g}$ representing a bordification of $\mathrm{Min}(C)$. This subcomplex contains all vertices labelled by multicurves that can be made arbitrarily short within $\mathrm{Min}(C)$. Theorem \ref{whichsubcomplex} is based on the next lemma.

\begin{lemma}
\label{oneway}
Suppose $D$ is a union of sets of minima, for example, $D$ is a dual to a locally top-dimensional cell of $\mathcal{P}^{X}_{g}$.  If the homology classes of the set of curves that can be made arbitrarily short on $D$ (the curves labelling the dual) do not span $H_{1}(S_{g};\mathbb{Q})$ then the horizon map $h$ takes $D$ to a boundary in $\mathcal{C}^{\circ}_{g}$.
\end{lemma}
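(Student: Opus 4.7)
My plan is to derive Lemma \ref{oneway} from an \emph{elementary necessary condition} for a cycle in $\mathcal{C}_g^\circ$ to represent a non-trivial homology class: namely, that its vertex set must correspond to simplices of $\mathcal{C}_g$ whose constituent curves collectively span $H_1(\mathcal{S}_g;\mathbb{Q})$. Let $\mathcal{F}$ denote the set of curves labelling the dual $D$. By the construction of the horizon map reviewed in Section \ref{dualitysec}, each vertex of $h(D)\subset\mathcal{C}_g^\circ$ is the barycenter of a simplex of $\mathcal{C}_g$ corresponding to a multicurve that becomes arbitrarily short on $D$, and every component of such a multicurve belongs by definition to $\mathcal{F}$. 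Under the hypothesis that $[\mathcal{F}]$ does not span $H_1(\mathcal{S}_g;\mathbb{Q})$, the necessary condition will then force $[h(D)]=0$.

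Before invoking the necessary condition I would verify that $h(D)$ is genuinely a cycle in $\mathcal{C}_g^\circ$. Since $D$ is a union of sets of minima meeting along codimension-one faces, its bordification inside an appropriate compactification of $\mathcal{T}_g$ is an object whose ideal boundary is $h(D)$; as the boundary of a bordification, $h(D)$ inherits a natural cycle structure, so that $[h(D)]\in H_*(\mathcal{C}_g^\circ;\mathbb{Q})$ is a well-defined class to be shown to vanish.

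The core of the argument is the necessary condition, and for this my approach would be symplectic. Let $V\subsetneq H_1(\mathcal{S}_g;\mathbb{Q})$ be the span of $[\mathcal{F}]$, and let $V^\perp$ denote its symplectic annihilator with respect to the algebraic intersection pairing. Since the pairing is non-degenerate and $V$ is proper, $V^\perp$ is non-zero; pick a primitive integral class $\xi\in V^\perp$ and represent it by an oriented non-separating simple closed curve $\alpha$. Then $\alpha$ has zero algebraic intersection with every curve of $\mathcal{F}$, and the strategy is to use $\alpha$ as a ``cone point'' to exhibit an explicit null-chain for $h(D)$: for each top-dimensional simplex of $h(D)$, surger its constituent curves along $\alpha$ to obtain representatives disjoint from $\alpha$, and then cone those off through the star of $\alpha$ in $\mathcal{C}_g^\circ$.

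The main obstacle is that algebraic intersection zero does not imply geometric disjointness, so $\alpha$ may cross many curves of $\mathcal{F}$, and the required surgeries must be performed compatibly across each simplex of $h(D)$, preserving pairwise disjointness within each simplex. I expect this is where Ivanov's $\Gamma_g$-equivariant homotopy equivalence between $\mathcal{C}_g$ and the boundary of the thick part of $\mathcal{T}_g$ does essential work: it translates the combinatorial surgery problem into a continuous deformation problem in Teichm\"uller space, where the Morse-theoretic structure of $\mathcal{P}_g^X$ and the dualisation procedure of Section \ref{dualitysec} supply the flexibility needed to assemble the local null-chains into a global one. Accordingly, I expect the full proof to exploit the structure of $D$ as a union of sets of minima rather more essentially than a purely simplicial argument in $\mathcal{C}_g^\circ$ would.
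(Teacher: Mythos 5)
There is a genuine gap. You correctly identify the first move, which coincides with the paper's: since the classes of the labelling curves span a proper subspace $V\subsetneq H_1(\mathcal{S}_g;\mathbb{Q})$ and the intersection pairing is non-degenerate, there is an oriented curve $\alpha$ with algebraic intersection number zero with every curve in $h(D)_v$ (the paper phrases this as the existence of a finite cyclic cover $\tilde{S}_{g(M)}$ to which all labelling curves lift, with fundamental domain cut along $\alpha$). But your proposed mechanism for producing the null-chain --- surger each labelling curve to be disjoint from $\alpha$ and cone off through the star of $\alpha$ in $\mathcal{C}_g^\circ$ --- is exactly the step you cannot carry out, and you say so yourself. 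Algebraic intersection zero gives no control over geometric intersection, the surgeries change the isotopy classes so the surgered chain is no longer $h(D)$, and there is no canonical way to choose the surgeries compatibly across the simplices of $h(D)$ so as to obtain a simplicial homotopy into the star of $\alpha$. Appealing to Ivanov's homotopy equivalence and to ``the flexibility of the Morse-theoretic structure'' does not supply this; it only restates the problem in $\mathcal{T}_g$.

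The paper's proof fills precisely this hole with a different construction. Passing to a cyclic cover of large degree $M$, the covering surface coarsely looks like a circle of fundamental domains, and the generic distance level sets from a point $\tilde{p}_1$ yield, after averaging over the deck group, a multicurve $\tilde{m}_{[\alpha]}$ that is deck-invariant and hence projects to a canonical multicurve $m_{[\alpha]}(x)$ on $S_g$ homologous to a multiple of $[\alpha]$, independent of the choices made. This assignment $x\mapsto m_{[\alpha]}(x)$ is locally constant up to passing to submulticurves, so a partition of unity produces a smoothly varying weighted multicurve $m(x)$ and hence a family of stretch paths pinching $m(x)$. The hypothesis is used a second time here: because every systole appearing near $\partial D^{\delta}$ lies in $h(D)_v$ and therefore lifts, the pinching homotopy decreases the systole function near $\partial D^{\delta}$ rather than pushing thin points back into the thick part. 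Compactness of $D^{\delta}$ then forces the whole of $D^{\delta}$ into the thin part, exhibiting $h(D)$ as a boundary in $\mathcal{C}_g^\circ$. In short, your outline has the right starting datum and the right target, but the central construction --- a canonical, continuously varying multicurve in the class of $\alpha$ along which to degenerate $D$ --- is absent, and without it the argument does not close.
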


The result in \cite{matroid} strongly suggests a partial converse to Lemma \ref{oneway}. While the converse to Lemma \ref{oneway} appears intuitively reasonable, the author is not aware of any techniques available for proving it. Somewhat similar statements were proven in Section 3 of \cite{MS}, but these arguments require control over geometric intersection numbers, not algebraic intersecton numbers of curves.\\


As mentioned above, the groups $Out(F_{n})$, $GL(n,\mathbb{Z})$ and mapping class groups of punctured surfaces all act on spaces for which there exist equivariant deformation retractions onto spines. For surfaces without punctures the analogue of the piecewise-linear structures on the spaces become piecewise-smooth. When working with the sets of minima, a number of new phenomena, most importantly the transversality issues ``breakdown in regularity'' documented in \cite{SchmutzMorse} and the ``unbalanced strata'' defined in \cite{MS}, mean that the analogue of Voronoi's techniques in this case might not give an equivariant cell decomposition, but at best an equivariant ``pinched cell decomposition'' (defined on page 7 of \cite{STD}). How to construct such an object is outlined at the end of \cite{STD}. Conditions ensuring that an analogue of Voronoi's cell decomposition exists are given in \cite{SchmutzVoronoi}. More work is needed on computing such objects and determining just how degenerate they can be.\\


\textbf{Outline of the paper.} Section \ref{defns} defines the basic concepts and provides the background knowledge that will be used throughout this paper. The notion of duality is defined in Section \ref{dualitysec}; this is necessary because the objects with which this paper is concerned are not the usual embedded submanifolds. It is explained how duals are used to relate the topology of $\mathcal{P}_{g}$ with the topology of Harvey's complex of curves, where the latter is viewed as describing a bordification of $\mathcal{T}_{g}$. Well-rounded deformation retractions of $\mathcal{T}_{g}$ are defined in Section \ref{fillandspansub} where the new theorems of this paper are also proven.


\subsection*{Acknowledgements} 
The author would like to thank the IHES for its hospitality while some of this work was being done, and to S. Garoufalidis for many helpful comments.


\section{Assumptions and Background}
\label{defns}

The purpose of this subsection is to provide definitions and background that will be used throughout this paper.\\

All surfaces are closed, compact, connected and orientable without marked points. Subsurfaces are embedded with homotopically nontrivial boundary curves. The symbol $S_{g}$, $g\geq 2$, will be used to denote a topological surface without boundary endowed with a marked hyperbolic structure corresponding to a point in $\mathcal{T}_{g}$. When there is no possibility for confusion, the same symbol $S_{g}$ will also be used to denote the topological surface of genus $g$ without boundary. Curves on surfaces are assumed to be unoriented nontrivial isotopy classes of embeddings of $S^{1}$ into $\mathcal{S}_{g}$. Sometimes a symbol for a curve will be used interchangeably to represent the image of a particular representative of the isotopy class, for example a geodesic. A multicurve is a set of curves, with pairwise geometric intersection number zero.\\

The mapping class group of $S_{g}$ will be denoted by $\Gamma_{g}$.\\

The length of a curve $c$ on a marked hyperbolic surface defines a smooth function $L(c):\mathcal{T}_{g}\rightarrow\mathbb{R}_{+}$. The function $L(c)$ is a special case of a length function.

\begin{definition}[Length function]
A finite ordered set of curves $C=(c_{1}, \ldots, c_{n})$ together with an ordered set of real, positive weights $A=(a_{1}, \ldots, a_{n})$ define a smooth function $L(A,C):\mathcal{T}_{g}\rightarrow \mathbb{R}^{+}$ as follows:
\begin{equation*}
L(A,C)(x)\,=\, \sum_{j=1}^{n} a_{j}L(c_{j})(x)
\end{equation*}
Any such function $L(A,C)$ will be called a length function.
\end{definition}
Length functions satisfy several convexity properties, as shown in \cite{Bestvina}, \cite{Kerckhoff}, and \cite{Wolpert}.  For example, they are strictly convex along Weil-Petersson geodesics.\\

A set $C$ of curves is said to fill the surface $S_{g}$ when cutting $S_{g}$ along the geodesic representatives of the curves in $C$ gives a set of polygons.

\begin{definition}
The Thurston spine, $\mathcal{P}_{g}$, is the set of points in $\mathcal{T}_{g}$ at which the set of shortest curves on $S_{g}$ (the systoles) fill $S_{g}$.
\end{definition}

It follows from a theorem due to Lojasiewicz, \cite{Lojasiewicz1964}, that $\mathcal{P}_{g}$ is a CW complex. Both $\mathcal{P}_{g}$ and $\mathcal{T}_{g}$ can be decomposed into sets, each of which has systoles given by a fixed set of curves. Following Thurston, these sets will be called strata. A stratum with set of systoles $C$ will be denoted by $\mathrm{Sys}(C)$. Top-dimensional strata of $\mathcal{T}_{g}$ are open sets labelled by a single curve. The cells of $\mathcal{P}_{g}^{X}$ come from the unstable manifolds of critical points of the systole function. Due to the fact that the systole function is not smooth, there are some technicalities involved in obtaining a cell decomposition of $\mathcal{P}_{g}^{X}$ in this way. These are discussed in \cite{Tspine}.\\

The systole function is a continuous, piecewise smooth function from $\mathcal{T}_{g}$ to $\mathbb{R}_{+}$, whose value at any point $x$ of $\mathcal{T}_{g}$ is given by the lengths of the systoles at that point. When restricted to a top-dimensional stratum of $\mathcal{T}_{g}$, the systole function is smooth. A useful property of the systole function is that it is invariant under the action of $\Gamma_{g}$.\\

As is the case for most cell complexes, it might not be true that every point of $\mathcal{P}_{g}$ has a neighbourhood intersecting a cell of the same maximal dimension. A locally top-dimensional cell is a cell that is not on the boundary of a larger dimensional cell.\\

\begin{definition}[Topological Morse function]
A topological Morse function $f$ is a continuous real-valued function on a topological $n$-dimensional manifold $M$, with the property that the points of $M$ are all either regular points or critical points. A regular point $p\in M$ is a point with an open neighbourhood $U$ in $M$, such that $U$ is a homeomorphic coordinate patch, with one of the coordinates on $U$ being the function $f$. When $p$ is a critical point, there exists a neighbourhood $U$ of $p$, and an index, $k\in \mathbb{Z}$, $0\leq k\leq n$, such that $U$ is a homeomorphic coordinate patch with the coordinates $\{x_{1}, \ldots, x_{n}\}$, and in $U$, $f$ is given by the formula
\begin{equation*}
f(x)-f(p)=\sum_{i=1}^{i=n-k}x_{i}^{2} - \sum_{i-n-k+1}^{i=n}x_{i}^{2}
\end{equation*}
\end{definition}

The systole function is a topological Morse function, \cite{Akrout}, \cite{SchmutzMorse}. Topological Morse functions can be used like the usual smooth Morse functions when working with homology and deformation retractions. The critical points of the systole function are all contained in $\mathcal{P}_{g}$, \cite{Thurston}.\\





Choose $\delta>0$ less than or equal to the Margulis constant $\epsilon_{M}$. The $\delta$-thick part of $\mathcal{T}_{g}$ will be denoted by $\mathcal{T}_{g}^{\delta}$. The set $\mathcal{T}_{g}^{\delta}$ is the complement of the pre-image of $(0, \delta)$ under the systole function. Let $\mathcal{C}_{g}^{\circ}$ be the barycentric subdivision of Harvey's curve complex $\mathcal{C}_{g}$. \\

\begin{theorem}[\cite{Ivanov}]
\label{complexembedding}
$\mathcal{C}_{g}^{\circ}$ is $\Gamma_{g}$-equivariantly homotopy equivalent to $\partial\mathcal{T}_{g}^{\delta}$.
\end{theorem}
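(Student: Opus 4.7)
The plan is to build a $\Gamma_g$-equivariant map $\Phi:\partial\mathcal{T}_g^\delta\to|\mathcal{C}_g^\circ|$ and show it is a homotopy equivalence via the nerve lemma. The key geometric input, already flagged in the paragraph preceding the statement, is that since $\delta\le\epsilon_M$, the Margulis lemma guarantees that any two curves of length $\le\delta$ on a hyperbolic surface in $\mathcal{T}_g$ are disjoint. Consequently, for every $x\in\mathcal{T}_g$ the set
\begin{equation*}
M(x)\;=\;\{c : L(c)(x)\le \delta\}
\end{equation*}
is a multicurve, hence spans a simplex of $\mathcal{C}_g$, and this simplex is nonempty precisely on the thin part and on $\partial\mathcal{T}_g^\delta$.

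Stratify $\partial\mathcal{T}_g^\delta$ by $x\mapsto M(x)$: for each multicurve $M$ let $Y_M=\{x : M(x)=M\}$, and let $U_M$ be a small $\Gamma_g$-invariant open thickening of $\overline{Y_M}$ in which no curves outside $M$ become shorter than $\delta$. The cover $\{U_M\}$ is $\Gamma_g$-invariant because $\phi\cdot M(x)=M(\phi\cdot x)$ for $\phi\in\Gamma_g$, and its nerve is canonically identified with $\mathcal{C}_g^\circ$ (vertices of the nerve are multicurves $M$, and $k$-simplices are flags $M_0\subsetneq\cdots\subsetneq M_k$, which is exactly the definition of the barycentric subdivision of $\mathcal{C}_g$).

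It then suffices to verify that each $U_M$ and each nonempty finite intersection $U_{M_0}\cap\cdots\cap U_{M_k}$ is contractible. Using Fenchel--Nielsen coordinates $(\ell_i,\tau_i)$ adapted to a pants decomposition containing the largest multicurve in the flag, the set of $x\in\mathcal{T}_g$ for which the curves in a multicurve $M_j$ all have length $\le\delta$ (with the remaining Fenchel--Nielsen coordinates unconstrained) is a product of half-spaces; intersecting with $\partial\mathcal{T}_g^\delta$ imposes the single equation $\min_{c\in M_0}L(c)=\delta$, which cuts out a contractible slice. Convexity of length functions along earthquakes and along Weil--Petersson geodesics, as proved in \cite{Bestvina}, \cite{Kerckhoff} and \cite{Wolpert}, confirms these regions are indeed convex, hence contractible. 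The nerve lemma then provides a homotopy equivalence, which can be made explicit by taking $\Phi$ to be given by a $\Gamma_g$-equivariant partition of unity subordinate to $\{U_M\}$, with homotopy inverse assembled cell-by-cell from Fenchel--Nielsen sections.

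The main obstacle is arranging $\Gamma_g$-equivariance throughout. The cover is manifestly equivariant, but building a $\Gamma_g$-equivariant partition of unity requires averaging a locally finite one over the (infinite) $\Gamma_g$-orbits, which is legitimate because $\Gamma_g$ acts properly discontinuously on $\mathcal{T}_g$ and the stabilizer of each simplex of $\mathcal{C}_g$ is a finite-index extension of the stabilizer of a multicurve; finiteness of these averages on compact sets is then automatic. A secondary technical point is ensuring $\partial\mathcal{T}_g^\delta$ is an ANR so that the nerve lemma applies in its $\Gamma_g$-equivariant form; this follows from the real-analytic structure on $\mathcal{T}_g$ and the smoothness of length functions, which make $\partial\mathcal{T}_g^\delta$ a topological submanifold outside a codimension-two stratification.
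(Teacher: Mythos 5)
The paper does not actually prove this statement --- it is imported verbatim from \cite{Ivanov} --- so there is no internal proof to compare against; what follows is an assessment of your argument on its own terms. Your overall strategy (a nerve argument on a multicurve-indexed cover, with Margulis supplying disjointness of $\delta$-short curves) is the right family of ideas and is close to the standard proofs, but two steps fail as written.

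First, the nerve of your cover is not $\mathcal{C}_g^\circ$. The closures $\overline{Y_{M_0}}$ and $\overline{Y_{M_1}}$ meet whenever $M_0\cup M_1$ is a multicurve realised as the full set of $\delta$-short curves at some point, not only when $M_0$ and $M_1$ are nested: for disjoint curves $c_1,c_2$, a point where both have length exactly $\delta$ lies in $\overline{Y_{\{c_1\}}}\cap\overline{Y_{\{c_2\}}}$, so the nerve acquires edges between incomparable vertices and contains simplices that are not flags. (The cleaner route is to cover the thin part by the sets $\{L(c)<\delta\}$ indexed by single curves; Margulis gives that a finite intersection is nonempty iff the curves form a multicurve, so the nerve is $\mathcal{C}_g$ itself, which has the same geometric realisation as $\mathcal{C}_g^\circ$.) Second, and more seriously, the contractibility argument does not apply to subsets of the level set $\partial\mathcal{T}_g^\delta$. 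Convexity of length functions makes the \emph{sublevel} sets $\{L(c)\le\delta\}$ and their intersections convex along Weil--Petersson geodesics, hence contractible; but your cover lives inside the level set $\{\mathrm{sys}=\delta\}$, and the intersection of a convex set with the boundary of another convex set need not be convex or contractible. The Fenchel--Nielsen description is likewise incorrect: lying on $\partial\mathcal{T}_g^\delta$ constrains the lengths of \emph{all} curves, not just the pants curves, so the region is not a product of half-spaces cut by one equation. The missing ingredient is an equivariant deformation retraction of the closed thin part onto $\partial\mathcal{T}_g^\delta$ that is compatible with the cover (or some other mechanism transporting contractibility from the open thin part, where convexity does apply, to its frontier). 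Finally, an averaged partition of unity produces an equivariant map, but upgrading the nerve lemma to a $\Gamma_g$-homotopy equivalence requires the equivariant nerve theorem, i.e.\ control of how simplex stabilisers act on the corresponding intersections; this is asserted rather than argued.
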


Note that the dimension of $\mathcal{C}_{g}$ is $3g-4$; less than the dimension $6g-7$ of $\partial\mathcal{T}_{g}^{\delta}$.\\


\textbf{Schmutz Schaller's sets of minima.} Sets of minima were introduced in \cite{SchmutzMorse}. 

\begin{definition}[Set of minima $\mathrm{Min}(C)$ and $\partial\mathrm{Min}(C)$]
Let $C$ be a set of curves on $\mathcal{S}_{g}$. The set of minima, $\mathrm{Min}(C)$, is the set of all $p\in \mathcal{T}_{g}$ such that every derivation $v\in T_{p}\mathcal{T}_{g}$ has the property that either
\begin{itemize}
\item{there exists $c_{i}, c_{j}\in C$ such that $vL(c_{i})(p)>0>vL(c_{j})(p)$}
\item{$vL(c_{i})(p)=0$ for every $c_{i}$ in $C$.}
\end{itemize}
Alternatively, $\mathrm{Min}(C)$ is the set of all points in $\mathcal{T}_{g}$ at which $L(A,C)$ realises its minimum for some $A\in \mathbb{R}_{+}^{|C|}$.
\end{definition}

The equivalence of the two definitions follows from the observation that length functions are convex, so a necessary and sufficient condition for a length function $L(A,C)$ to have a minimum at $x$ is that the gradient at $x$ is zero. It was shown in Lemma 1 of \cite{SchmutzMorse} that when the curves in $C$ fill, $L(A,C)$ has a unique minimum for every $A$ with strictly positive entries. Hence, by proposition 1 of \cite{Thurston}, $\mathrm{Min}(C)$ is nonempty iff $C$ fills.\\

This paper will be exclusively concerned with sets of minima $\mathrm{Min}(C)$ for which the curves in $C$ fill and have pairwise geometric intersection number at most one. This rules out nonsimple filling curves or separating curves in the set $C$.\\

There is a sense in which Schmutz Schaller's sets of minima and the Thurston spine are dual to each other. This is explored in detail in \cite{STD}. There are qualifications and technical details needed to make this rigorous, but informally, $\mathcal{P}_{g}$ can be thought of as the unstable manifolds of the critical points, and the stable manifolds are corresponding sets of minima dual to $\mathcal{P}_{g}$. The stable and unstable manifolds of a topological Morse function were defined in \cite{Morse}. Due to the fact that the systole function is not smooth, the stable and unstable manifolds of critical points of the systole function may not be uniquely defined. This is one motivation for working instead with $\mathcal{P}_{g}$ and sets of minima.




\section{Duality}
\label{dualitysec}
This section begins with a discussion of what is meant by a dual to $\mathcal{P}_{g}$ or to a retract of $\mathcal{P}_{g}$, and how sets of minima can be used to define these duals. First of all, motivating examples will be given, demonstrating existence, followed by a definition. The horizon map from \cite{STD} will then be defined. This map relates a set of minima to a subcomplex of the barycentric subdivision $\mathcal{C}^{\circ}_{g}$ of Harvey's curve complex $\mathcal{C}_{g}$. A reference for this section is Section 5 of \cite{STD}.\\
 
 A locally top-dimensional cell of $\mathcal{P}_{g}^{X}$ is contained in an unstable manifold of a critical point $p$, with the property that any critical points on the boundary of this cell have strictly larger index than $p$. An example of a dual to $\mathcal{P}_{g}^{X}$ at $p$ is a stable manifold of the critical point $p$, provided this stable manifold only intersects $\mathcal{P}_{g}^{X}$ in the point $p$. \\
 
As explained in \cite{MS}, there is an equivariant deformation retraction of $\mathcal{T}_{g}$ onto $\mathcal{P}_{g}^{X}$ obtained from a systole function-increasing flow. This deformation retraction is an extension of Thurston's deformation retraction. One way of finding a stable manifold of $p$ is therefore to take the pre-image of $p$ under the deformation retraction of $\mathcal{T}_{g}$ onto $\mathcal{P}_{g}$ or $\mathcal{P}_{g}^{X}$. For critical points in locally top-dimensional cells of $\mathcal{P}_{g}^{X}$, this pre-image intersects $\mathcal{P}_{g}^{X}$ only in the point $p$. \\

Denote by $\overline{\mathcal{T}}_{g}$ the metric completion of $\mathcal{T}_{g}$ with respect to the Weil-Petersson metric, where $\partial \mathcal{T}_{g}:=\overline{\mathcal{T}}_{g}\setminus \mathcal{T}_{g}$.

\begin{theorem}[Theorem 1.2 of \cite{STD}]
\label{1point2}
Suppose $p$ is a critical point of the systole function with set of systoles $C$, and $T_{p}\mathrm{Min}(C)$ does not have any vectors in the tangent cone to $\mathcal{P}_{g}$. Then $\mathrm{Min}(C)$ is a cell with boundary in $\partial \mathcal{T}_{g}$. In addition, there is a homotopy of $\mathrm{Min}(C)$ to a cell obtained as the pre-image of $p$ under Thurston's equivariant deformation retraction of $\mathcal{T}_{g}$ onto $\mathcal{P}_{g}$. This homotopy fixes $p$ and keeps the thin part of $\mathrm{Min}(C)$ in the thin part of $\mathcal{T}_{g}$.
\end{theorem}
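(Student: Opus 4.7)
The plan is to realise $\mathrm{Min}(C)$ as an explicit cell parametrized by positive weights, and then to match it with the preimage $r^{-1}(p)$ of $p$ under Thurston's retraction $r$ by sliding along the flow that defines $r$. Starting from the second characterisation of $\mathrm{Min}(C)$: for each $A\in\mathbb{R}_+^{|C|}$ the length function $L(A,C)$ is strictly convex along Weil--Petersson geodesics and proper on $\mathcal{T}_g$ because $C$ fills, so it has a unique minimizer $m(A)$. The assignment $A\mapsto m(A)$ factors through the open $(|C|-1)$-simplex $\Delta$ of normalised weights and surjects onto $\mathrm{Min}(C)$. Injectivity and the cell structure should follow from an implicit function argument on the gradient equation $\sum a_i \nabla L(c_i)(x)=0$, using convexity of $L(A,C)$ together with the hypothesis that $T_p\mathrm{Min}(C)$ is disjoint from the tangent cone to $\mathcal{P}_g$; the latter prevents $m(A)$ from colliding with neighbouring critical points of the systole function, where the set of systoles would jump and the parametrization degenerate.

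For the boundary claim I would analyse $m(A)$ as $A$ approaches a facet $a_j\to 0$. If the reduced set $C\setminus\{c_j\}$ still filled, the minimizer would converge to an interior point of $\mathcal{T}_g$ belonging to a lower-dimensional set of minima; the tangent cone hypothesis, together with the injectivity established above, prevents this from happening within the cell of $p$. Hence $C\setminus\{c_j\}$ fails to fill, and standard arguments force some curve disjoint from $C\setminus\{c_j\}$ to be pinched in the limit, i.e.\ $m(A)$ exits to $\partial\mathcal{T}_g$ through a Weil--Petersson boundary stratum. For the homotopy to $r^{-1}(p)$ I would use the systole-increasing vector field $X$ from \cite{MS}, whose flow $\Phi_t$ realises Thurston's retraction onto $\mathcal{P}_g^X$. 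Both $\mathrm{Min}(C)$ and $r^{-1}(p)$ have matching tangent data at $p$, extracted from the topological Morse normal form for the systole function via the tangent cone hypothesis, so their germs at $p$ are ambiently isotopic. I would globalise this isotopy by pushing $\mathrm{Min}(C)$ along $\Phi_t$: setting $H_s(x)=\Phi_{s\cdot T(x)}(x)$, where $T(x)$ is the flow time for $x$ to reach $\mathcal{P}_g^X$, gives a homotopy from the inclusion of $\mathrm{Min}(C)$ to a cell whose image agrees with $r^{-1}(p)$ after mild reparametrisation.

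The main obstacle is the thin-part control. The vector field $X$ is engineered to increase the systole function but not to fix lengths of non-systole short curves, so it is not immediate that $\Phi_t$ respects the Margulis stratification. One must verify that for any curve $c$ short at $x$, the length $L(c)(\Phi_t(x))$ remains below the Margulis threshold for all relevant $t$; this is plausible because $L(c)$ blows up on $\partial\mathcal{T}_g$ while $X$ has controlled norm, but making it rigorous likely requires revisiting the construction of $X$ to tame the derivative of each length function along the flow. Once this control is in place, each thin stratum is carried to itself along $\Phi_t$, and the pushing homotopy $H_s$ therefore does not cross the Margulis boundary, giving the final clause of the conclusion.
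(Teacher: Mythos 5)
First, note that this statement is imported: it is Theorem 1.2 of \cite{STD}, and the present paper gives no proof of it, so there is no in-paper argument to compare yours against. Judged on its own terms, the first half of your sketch (parametrising $\mathrm{Min}(C)$ by the open simplex of normalised weights via the unique minimisers of $L(A,C)$, and analysing degeneration as weights tend to zero) is the natural starting point and is consistent with how sets of minima are treated in \cite{SchmutzMorse} and \cite{STD}. But the injectivity and non-degeneracy of $A\mapsto m(A)$ is exactly where the known difficulties live: the paper explicitly flags the ``breakdown in regularity'' of \cite{SchmutzMorse} and the resulting failure of transversality, so an appeal to the implicit function theorem on $\sum a_i\nabla L(c_i)(x)=0$ cannot be waved through; the map can fail to be an immersion, and this is precisely why the cited source needs the tangent-cone hypothesis and a more careful argument. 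Your use of that hypothesis (``prevents $m(A)$ from colliding with neighbouring critical points'') is not what it says: it is a condition on $T_p\mathrm{Min}(C)$ versus the tangent cone to $\mathcal{P}_g$ at the single point $p$, and it does not by itself control the behaviour of $m(A)$ over the whole simplex.

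The second half contains a concrete error. The homotopy $H_s(x)=\Phi_{s\cdot T(x)}(x)$ along the systole-increasing flow sends every $x$ at time $s=1$ to its endpoint $r(x)\in\mathcal{P}_g^X$; it therefore collapses $\mathrm{Min}(C)$ onto a subset of $\mathcal{P}_g^X$, which is not $r^{-1}(p)$ --- the latter is a set transverse to $\mathcal{P}_g^X$ meeting it only at $p$, not a subset of it. To carry $\mathrm{Min}(C)$ onto $r^{-1}(p)$ you would need to move points \emph{across} flow lines (e.g.\ using a product structure of a neighbourhood of $r^{-1}(p)$ furnished by the flow), not along them, and flow lines through points of $\mathrm{Min}(C)$ need not meet $r^{-1}(p)$ at all. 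The same error defeats the final clause: since $X$ increases the systole function, $\Phi_t$ moves thin-part points toward the thick part, so your homotopy cannot ``keep the thin part of $\mathrm{Min}(C)$ in the thin part of $\mathcal{T}_g$''; the thin-part control you worry about at the end is not a technical loose end of your construction but evidence that the construction is pointed in the wrong direction.
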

  
Note that $\mathcal{P}_{g}$ is contained in the thick part of $\mathcal{T}_{g}$, whereas the boundary of the set $\mathrm{Min}(C)$ from Theorem \ref{1point2} consists of points in the bordification of $\mathcal{T}_{g}$ corresponding to noded surfaces.\\
 
A set of minima $\mathrm{Min}(C)$ satisfying the conclusions of Theorem \ref{1point2} for some $C$ is an example of a dual to $\mathcal{P}_{g}$. The image of a dual to $\mathcal{P}_{g}$ (or $\mathcal{P}_{g}^{X}$, or a retract of $\mathcal{P}_{g}^{X}$) under a homotopy that keeps the thin part of the dual in the thin part of $\mathcal{T}_{g}$, does not introduce points of intersection with $\mathcal{P}_{g}$ (or $\mathcal{P}_{g}^{X}$, or a retract of $\mathcal{P}_{g}^{X}$), and keeps the unique point of intersection with $\mathcal{P}_{g}$ (or $\mathcal{P}_{g}^{X}$, or a retract of $\mathcal{P}_{g}^{X}$) in the same locally-top-dimensional cell of $\mathcal{P}_{g}$ (or $\mathcal{P}_{g}^{X}$, or a retract of $\mathcal{P}_{g}^{X}$) will also be called a dual to $\mathcal{P}_{g}$ (or $\mathcal{P}_{g}^{X}$, or a retract of $\mathcal{P}_{g}^{X}$).\\
 
When $p$ is contained in a locally top-dimensional cell of $\mathcal{P}_{g}^{X}$, it is possible that the condition in Theorem \ref{1point2} that $T_{p}\mathrm{Min}(C)$ does not have any vectors in the tangent cone to $\mathcal{P}_{g}$ breaks down. There might be examples in which there exist strata in $\mathcal{P}_{g}$ ``below'' the critical point $p$ in $\mathrm{Min}(C)$, that are not contained in the unstable manifold of any critical point. Also, performing an equivariant deformation retraction on $\mathcal{P}_{g}^{X}$ creates new locally top-dimensional cells. Section 5.2 of \cite{STD} explains how to construct duals to these locally top-dimensional cells by taking unions of sets of minima. In pathological cases, these unions could be quite complicated geometrically and may not be cells.  \\

The key defining properties of duals that will be used in the rest of this paper are
\begin{itemize}
\item{A dual intersects $\mathcal{P}_{g}^{X}$ or a complex obtained as a deformation retraction of $\mathcal{P}_{g}^{X}$ in a single point $q$.}
\item{The point $q$ is contained in a locally top-dimensional cell of the respective complex.}
\item{A homotopy keeping the thin part of the dual in the thin part of $\mathcal{T}_{g}$ can only take the dual to a set disjoint from the respective complex if the complex has nonempty boundary.}
\end{itemize}
  
It will only be necessary to consider duals that intersect an unstable manifold of a critical point $p$ in the critical point $p$.\\
 
Denote by $\mathcal{C}_{g}^{\circ}$ the barycentric subdivison of Harvey's complex of curves $\mathcal{C}_{g}$. The map 
$$h: \{\mathrm{Min}(C)\ |\ C \text{ fills and elements have pairwise intersection number at most one}\}\rightarrow \mathcal{C}_{g}^{\circ}$$ is called the horizon map in \cite{STD}. It maps a set $\mathrm{Min}(C)$ to the subcomplex of $\mathcal{C}_{g}^{\circ}$ with vertices labelled by multicurves that become arbitrarily short on $\mathrm{Min}(C)$. The properties of the horizon map were studied in Section 3 of \cite{STD}.\\
 
For $\delta\leq \epsilon_{M}$, the systoles on $\partial \mathcal{T}_{g}^{\delta}$ are pairwise disjoint. Consequently, each stratum of $\mathrm{Min}(C)\cap \partial \mathcal{T}_{g}^{\delta}$ determines a cell of $\mathcal{C}_{g}^{\circ}$. The image of the horizon map is homotopy equivalent to the induced cell decomposition of $\mathrm{Min}(C)\cap \partial \mathcal{T}_{g}^{\delta}$ in the limit as $\delta\rightarrow 0$. \\

The set of curves that determine vertices of $h(\mathrm{Min}(C))$ will be denoted by $h(\mathrm{Min}(C))_{v}$. As it is always possible to construct a dual from a union of sets of minima, in this paper, all duals to $\mathcal{P}_{g}^{X}$ and to complexes contained in $\mathcal{P}_{g}^{X}$ can and will be assumed to be a union of sets of minima. It therefore makes sense to talk of the image of a dual $D$ under the horizon map $h$. If a dual $D$ intersects $\mathcal{P}_{g}^{X}$ or a retract of $\mathcal{P}_{g}^{X}$ in a critical point with set of systoles $C$, it follows from Corollary 3.4 of \cite{STD} that the set of systoles at the critical point is contained in $h(D)_{v}$. In addition, $h(\mathrm{Min}(C))_{v}$ also contains the multicurves on the boundaries of subsurfaces filled by the geodesic representatives of subsets of $C$.\\

The set $h(D)_{v}$ will also be called the set of curves labelling the dual $D$.  These labels are conjectured to be unique, but this will not be needed here. \\

The horizon map is reminiscent of the ``screens'' defined in \cite{PM}. In a cell decomposition of decorated Teichm\"uller space, there are well-known cell decompositions, with the property that each cell is labelled by a fatgraph. A screen is a combinatorial object used to determine which curves can be made arbitrarily short on the cell labelled by a given fatgraph. Screens construct sets of curves with the same closure properties as the set $h(\mathrm{Min}(C))_{v}$; any curve on the boundary of a subsurface filled by a subset is also in the set. The notion of ``subsurface filled by'' is independent of whether the curves are contained in a fatgraph or the surface $S_{g}$.\\

\section{Well-rounded deformation retractions}
\label{fillandspansub}
This section begins with an important definition.

\begin{definition}[Well-rounded deformation retraction]
A well-rounded deformation retraction of $\mathcal{T}_{g}$ is an equivariant deformation retraction of $\mathcal{T}_{g}$ onto a CW-complex $\mathcal{W}_{g}\subset\mathcal{P}^{X}_{g}\subset \mathcal{P}_{g}$ for which every locally top-dimensional cell of $\mathcal{W}_{g}$ has a dual labelled by a set of curves that spans $H_{1}(\mathcal{S}_{g};\mathbb{Q})$.
\end{definition}

\begin{remark}
\label{subcomplex}
\textit{Suppose} $\mathcal{W}_{g}$ \textit{is the image of }$\mathcal{T}_{g}$ \textit{under a well-rounded deformation retraction and }$\mathcal{W}_{g}$ \textit{is not minimal, i.e. a further equivariant deformation retraction can be performed on} $\mathcal{W}_{g}$ \textit{to obtain a complex} $\mathcal{W}'_{g}$.\textit{ Then the duals to any newly created locally top-dimensional cells of}$\mathcal{W}'_{g}$ \textit{contain the duals to cells of} $\mathcal{W}_{g}$. \textit{It follows that} $\mathcal{W}'_{g}$ \textit{is also the image of a well-rounded deformation retraction. Informally, lower dimensional cells have higher dimensional duals, labelled by larger sets of curves.\\}

\textit{As discussed briefly in the introduction, well-rounded deformation retractions can be shown to be minimal if the converse to Lemma \ref{oneway}, restated below, holds.}
\end{remark}

The next lemma contains the key ideas of this paper.

\begin{lemma}[Lemma \ref{oneway} from the Introduction]
Suppose $D$ is a union of sets of minima, for example, $D$ is a dual to a locally top-dimensional cell of $\mathcal{P}^{X}_{g}$.  If the homology classes of the set of curves that can be made arbitrarily short on $D$ (the curves labelling the dual) do not span $H_{1}(S_{g};\mathbb{Q})$ then the horizon map $h$ takes $D$ to a boundary in $\mathcal{C}^{\circ}_{g}$.
\end{lemma}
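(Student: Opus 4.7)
The plan is to exhibit $h(D)$ as a cycle in $\mathcal{C}_g^\circ$ that is null-homologous, relying on the fact that the hypothesis places all vertex labels inside a proper subspace $V$ of $H_1(\mathcal{S}_g;\mathbb{Q})$. Since $D$ is a union of sets of minima, each a cell with boundary in $\partial\mathcal{T}_g$ by Theorem \ref{1point2}, the horizon map sends the boundary of $D$ (a sphere-like piece in the bordification) to a cycle in $\mathcal{C}_g^\circ$, and via Theorem \ref{complexembedding} this cycle has a well-defined homology class in $H_*(\mathcal{C}_g^\circ;\mathbb{Q})$. The lemma asserts that this class vanishes.

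First I would set $V:=\mathrm{span}_\mathbb{Q}\{[c]\,:\,c\in h(D)_v\}$ and consider the full subcomplex $\mathcal{C}^V\subseteq\mathcal{C}_g^\circ$ on multicurves whose components all represent classes in $V$. By the closure property of $h(\mathrm{Min}(C))_v$ recalled in Section \ref{dualitysec} (boundaries of subsurfaces filled by sub-collections are again labels), the entire subcomplex $h(D)$ lies inside $\mathcal{C}^V$. The strategy is then to exploit the simplicial ``homology-span'' map $\phi\colon\mathcal{C}^V\to\mathcal{B}(V)$ into the order complex of isotropic subspaces of $V$ under the algebraic intersection pairing. When $V$ is itself isotropic, $V$ serves as a cone-point of $\mathcal{B}(V)$, so $\mathcal{B}(V)$ is contractible and $\phi_{*}[h(D)]=0$; a Quillen Theorem A style comparison, using that the fibres of $\phi$ over an isotropic flag retract onto highly connected curve complexes of the subsurfaces filled by curves representing that flag (Harer's connectivity estimates), then transports the null-homology back to $\mathcal{C}^V\subseteq\mathcal{C}_g^\circ$.

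The main obstacle is the case where $V$ is not isotropic, i.e.\ where distinct multicurves labelling vertices of $h(D)$ contain components with non-zero algebraic intersection. In that situation $\mathcal{B}(V)$ loses its canonical cone-point, and contractibility has to be established by a symplectic-reduction argument: take a maximal isotropic $U\subseteq V$ and use the radical of the restricted form on $V$ to cone $\mathcal{B}(V)$ down to the sub-building of isotropic subspaces containing a chosen line in the radical (or the reduction of $V$ by that radical, if the form is non-degenerate). A secondary technical issue is the absence of any single simple closed curve that is both disjoint from every curve in $h(D)_v$ and represents a class outside $V$, which rules out the most naive direct cone construction in $\mathcal{C}_g^\circ$; the compatibility between the overlapping sets of minima making up $D$ (Section~5.2 of \cite{STD}) is what allows the null-homology to be assembled simplex-by-simplex, and verifying that this assembly is globally consistent is, I expect, the most delicate part of the proof.
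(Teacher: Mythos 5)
Your route is genuinely different from the paper's, and it has gaps that are not merely technical. The most serious one is the contractibility of your target $\mathcal{B}(V)$. A proper subspace $V\subset H_1(\mathcal{S}_g;\mathbb{Q})$ need not be isotropic and need not be degenerate: it can be a nondegenerate symplectic subspace of smaller rank (the hypothesis only gives $V^{\perp}\neq 0$, not $V\cap V^{\perp}\neq 0$). In that case the poset of nonzero isotropic subspaces of $V$ is the symplectic Tits building of $V$, which by Solomon--Tits is a wedge of spheres, not contractible, and your proposed fix (coning to a line in the radical, or passing to the reduction by the radical) does nothing because the radical is zero and the reduction is $V$ itself. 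Two further problems: the map $\phi$ is not defined on separating curves (and more generally on multicurves whose components span the zero subspace), which do occur in $h(D)$ since the labels are closed under taking boundaries of filled subsurfaces; and the Quillen Theorem A step requires knowing that the subcomplex of curves whose classes lie in a fixed proper subspace (the fibres of $\phi$) is highly connected --- this is a substantial claim, not a consequence of Harer's connectivity results for curve complexes of subsurfaces, and as far as I know it is open in the generality you need. You flag some of these issues yourself, but they are not loose ends to be tidied; each one blocks the argument.

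The paper's proof avoids buildings entirely and is geometric. The only use made of the non-spanning hypothesis is that there exists an oriented simple closed curve $\alpha$ with algebraic intersection number zero with every curve in $h(D)_v$, hence a cyclic $M$-fold cover $\tilde{S}_{g(M)}$ (dual to $[\alpha]$) to which every label lifts. For large $M$ the cover is coarsely a circle, and generic level sets of the distance function from a basepoint, averaged over the deck group, produce a canonical multicurve $m_{[\alpha]}(x)$ on $S_g$ at each point $x$ of the thick part of $D$, varying lower semicontinuously in a way that can be smoothed by a partition of unity. Shortening $m_{[\alpha]}(x)$ along stretch paths pushes all of $D^{\delta}$ into the thin part of $\mathcal{T}_g$ rel its intersection with the $\delta$-thick boundary; via the identification of $\mathcal{C}_g^{\circ}$ with $\partial\mathcal{T}_g^{\delta}$ this exhibits $h(D)$ explicitly as a boundary. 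If you want to salvage a combinatorial argument along your lines, you would at minimum need to replace $\mathcal{B}(V)$ by a target that is contractible for every proper $V$ and to prove the fibre connectivity; the paper's construction of the multicurve $m_{[\alpha]}$ from the cover is precisely the device that substitutes for the missing cone point.
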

\begin{proof}
In the following, the same symbol $D$ will be used to refer to both an embedding and the image in $\mathcal{T}_{g}$ of the embedding.\\

Suppose the curves in $h(D)_{v}$ do not span $H_{1}(S_{g};\mathbb{Q})$. For some $0<\delta<\epsilon_{M}$, a homotopy of $D$ fixing the points $D\cap \mathcal{T}_{g}^{\delta}$ and taking $D$ into $\mathcal{T}_{g}^{\delta}$ will be constructed. Informally the idea is to use a missing homology class to determine a preferred, nonvanishing direction in which the homotopy shifts points to eventually increase the diameter of the surface and hence reach the thin part of $\mathcal{T}_{g}$.\\

Choose an orientation on the curves in $h(D)_{v}$. Denote by $\tilde{S}_{g(M)}$ an $M$-sheeted cyclic cover of $S_{g}$ of genus $g(M)$ to which all the curves in $h(D)_{v}$ lift. Such a cover exists because the curves in $h(D)_{v}$ do not span homology. Let $\alpha$ be an oriented curve on $S_{g}$ with algebraic intersection number zero with each of the curves in $h(D)_{v}$. A fundamental domain of the cover can be constructed by cutting $S_{g}$ along $\alpha$.

\begin{remark}
\textit{The cyclic cover }$\tilde{S}_{g(M)}$ \textit{of} $S_{g}$\textit{ illustrates the type of pathological examples that can occur when a set of systoles fill the surface but does not span  rational homology. A concrete example with $g=5$ is given in \cite{Ni}. Suppose }$C$\textit{ is the set of systoles at the critical point at which a dual }$D$\textit{ intersects }$\mathcal{P}^{X}_{g}$. \textit{The lengths of curves in }$\tilde{S}_{g(M)}$\textit{ are no less than the lengths of their projections to }$S_{g}$.\textit{ The stratum }$\mathrm{Sys}(C)$ \textit{of} $\mathcal{T}_{g}$ \textit{determines a stratum }$\mathrm{Sys}(\tilde{C})$ \textit{in} $\mathcal{T}_{g(M)}$\textit{, where the curves in }$\tilde{C}$\textit{ are the connected components of the pre-images of the curves in }$C$. \\

\textit{The curves in }$\tilde{C}$\textit{ fill }$\tilde{S}_{g(M)}$\textit{, but for large index covers, a surface corresponding to a point in }$\mathrm{Sys}(\tilde{C})$\textit{ looks more like a graph than a surface that one would expect to see in the Thurston spine, where one expects the existence of a fundamental domain that is almost circular. By choosing }$M$\textit{ large enough, examples can be constructed in which the systoles fill, but the lengths of the systoles are arbitrarily far from a maximal value described in \cite{BuserSarnak} that increases approximately logarithmically with the genus of the surface.}
\end{remark}

By making $M$ large, the diameter of $\tilde{S}_{g(M)}$ can be made large. Suppose $\tilde{x}$ is in the thick part of $\mathcal{T}_{g(M)}$, and represents the cover of a surface corresponding to a point in $D$. The thick part of $D$ is compact, so any arguments that require $M$ to be sufficiently large can be made to work for any choice of $\tilde{x}$. The diameter $d(\tilde{x})$ of the surface represented by $\tilde{x}$ is realised by the length of an arc passing through roughly half of the fundamental domains of the cover, where fundamental domains as assumed to be obtained as connected components of $\tilde{S}_{g(M)}$ cut along the geodesic representative of $\tilde{\alpha}$. Call the endpoints of the arc $\tilde{p}_{1}$ and $\tilde{q}_{1}$. This arc is not unique. As the covering space looks approximately like $S^{1}$, there is at least one other arc of the same length with endpoints $\tilde{p}_{1}$ and $\tilde{q}_{1}$ that goes the other way around $S^{1}$.\\

The symbol $[\alpha]\in H_{1}(S_{g};\mathbb{Q})$ will be used to denote the rational homology class represented by the oriented curve $\alpha$.\\

The next step is to show that at $\tilde{x}$, for sufficiently large $M$, it is possible to find a weighted multicurve $\tilde{m}_{[\alpha]}$ in $\tilde{S}_{g(M)}$independent of choices that projects onto a weighted multicurve in the genus $g$ surface. \\

Recall that the surface corresponding to $\tilde{x}$ coarsely looks like a copy of $S^{1}$, and the hyperbolic structure is invariant under the action of the deck transformation group. Denote by $d_{F}(\tilde{x})$ the diameter of the fundamental domain obtained by cutting $S_{g}$ along $\alpha$, and $d(\tilde{x})$ the diameter of the surface corresponding to $\tilde{x}$. For generic $r$ in the range $d_{F}(\tilde{x})<r<d(x)-d_{F}(\tilde{x})$, the set $B(r,p_{1}):=\{\tilde{y}\in\tilde{S}_{g(M)}\ |\ d(\tilde{p}_{1}, \tilde{y})\leq r\}$ has boundary $\partial B(r,\tilde{p}_{1})$ consisting of 2 multicurves, $b_{1}(r,\tilde{p}_{1})$ and $b_{2}(r,\tilde{p}_{1})$. Each of these multicurves is homologous to an integer multiple of $\tilde{\alpha}$, where $\tilde{\alpha}$ is the pre-image of $\alpha$. The multicurve $\tilde{m}_{\tilde{p}_{1}}$ on $\tilde{S}_{g(M)}$ is obtained by letting $r$ vary over all generic values less that the diameter of the surface, and discarding any contractible connected components of the level sets of $r$. \\

Let $\sigma$ be a choice of generator of the deck transformation group of the cover. Recall that curves are only defined up to isotopy. For sufficiently large $r$ less than $d(x)-2d_{F}(\tilde{x})$, $b_{1}(r,\tilde{p}_{1})$ satisfies 
\begin{equation}
\label{disp}
b_{1}(r,\sigma(\tilde{p}_{1}))=b_{1}(r\pm w(r),\tilde{p}_{1})
\end{equation} where $w>0$ is a measure of the width of a fundamental domain as it does not vary much with $r$. The justification for Equation \eqref{disp} is that the level sets of distance from $\tilde{p}_{1}$ and $\sigma(\tilde{p}_{1})$ can be made arbitrarily close to parallel by making $r$ sufficiently large (this requires large $M$). When two embedded loops are sufficiently close with respect to Hausdorff distance, one is contained in the collar of the other, and the two embedded loops represent the same curve. Similarly for $b_{2}$. When $\tilde{p}_{1}$ is replaced by a different point $\tilde{p}'_{1}$ within the same fundamental domain, the same argument shows that $$b_{1}(r,\tilde{p}_{1})=b_{1}(r+w'(r),\tilde{p}'_{1})$$ for large generic $r$ less than $d(x)-d_{F}(\tilde{x})$ and some real function $w'(r)$ that does not vary much with $r$.\\

In what follows, sets of weighted curves will refer to elements of a real vector space with a basis corresponding to unoriented curves on $\tilde{S}_{g(M)}$. A curve or multicurve is a set of weighted curves with every connected component having weight one and for which curves in the set are pairwise disjoint. \\

For the point $\tilde{x}$, take an average of $\tilde{m}_{\tilde{p}_{i}}$ over all $p_{i}$ in the orbit of $p_{1}$ under the deck transformation group. For sufficiently large $M$, this average is a set of weighted curves $\tilde{m}_{[\alpha]}$ with coefficients close to 1, and a set of weighted curves with coefficients close to zero. The set of curves with coefficient close to 1 consists of multicurves $b_{1}(p_{i}, r)$ or $b_{2}(p_{i}, r)$ for which $r$ is not small or close to $d(\tilde{x})$ and $i\in \{1, 2, \ldots, M\}$, and curves with coefficient close to zero consist of multicurves $b_{1}(p_{i}, r)$ or $b_{2}(p_{i}, r)$ for which $r$ is small or close to $d(\tilde{x})$ and $i\in \{1,2,\ldots, M\}$.\\

By abuse of notation, the symbol $\tilde{m}_{[\alpha]}$ will also be used to denote the corresponding set of curves without the weights.\\

Claim: $\tilde{m}_{[\alpha]}$ projects onto a multicurve $m_{[\alpha]}$ in $S_{g}$ for which the homology class is a multiple of $[\alpha]$. Moreover, $m_{[\alpha]}$ does not depend on the choice of $p_{1}$.\\

The restriction of $\tilde{m}_{[\alpha]}$ to a fundamental domain is given by the isotopy classes of generic level sets of $r$ when measured from a distant point in the orbit of $\tilde{p}_{1}$ with $d_{F}(\tilde{x})<r<d(x)-d_{F}(\tilde{x})$. It follows that $\tilde{m}_{[\alpha]}$ is a multicurve. Also, $\tilde{m}_{[\alpha]}$ is invariant under the action of the deck transformation group. Consequently, $m_{[\alpha]}$ is a multicurve, because otherwise its pre-image $\tilde{m}_{[\alpha]}$ would have self-intersections.\\

For sufficiently large values of $r$ with $d_{F}(\tilde{x})<r<d(x)-d_{F}(\tilde{x})$, moving $\tilde{p}_{1}$ around in a fundamental domain gives level sets of $r$ that are close to parallel, and hence represent the same curves. This shows that $m_{[\alpha]}$ does not depend on the choice of $\tilde{p}_{1}$, concluding the proof of the claim.\\

Suppose $\delta'>0$ is less than the Margulis constant $\epsilon_{M}$, i.e. $\mathcal{T}_{g}^{\delta'}$ contains a neighbourhood of $\mathcal{T}_{g}^{\epsilon_{M}}$. Let $\delta=\frac{\delta'}{2}$. Denote by $D^{\delta}$ the intersection of $D$ with $\mathcal{T}_{g}^{\delta}$. The equivalence relation $x\sim y$ if $m_{[\alpha]}(x)=m_{[\alpha]}(y)$ partitions $D^{\delta}$ into blocks.\\


For $\delta$ sufficiently small, it follows from the collar lemma that the systoles in the thin part of $D$ are contained in the set $h(D)_{v}$. The assumption on $\alpha$ ensures that these systoles lift to systoles in the covering space. Choosing $\delta$ sufficiently small ensures that near the boundary of $\mathcal{T}_{g}^{\delta'}$, the blocks will be labelled by multicurves containing the systoles.\\ 

Claim: Suppose every neighbourhood of $x$ intersects several different blocks. Then there is a neighbourhood $N(x)$ of $x$ in $D^{\delta}$ with the property that $N(x)$ only intersects blocks labelled by multicurves that contain a submulticurve $m$ of $m_{[\alpha]}(x)$.\\

To prove the claim, suppose that every neighbourhood of $x$ intersects several different blocks. Let $C$ be a multicurve contained in $m_{[\alpha]}(x)$ such that $\tilde{m}_{[\alpha]}(\tilde{x})$ contains a lift of $C$ on the boundary of $B(r,\tilde{p}_{i})$ for some $i\in \mathbb{Z}$ and some large $r$ in the interval $d_{F}(\tilde{x})<r<d(\tilde{x})-d_{F}(\tilde{x})$. Any sufficiently small deck transformation group-equivariant deformation of the hyperbolic structure corresponding to $\tilde{x}$ will preserve $C$. The same holds for any other multicurve in $\tilde{m}_{[\alpha]}(\tilde{x})$ contained in a level set. This concludes the proof of the claim.\\

Starting at a point $x$ of $D$ the multicurves labelling the blocks will now be used to construct a stretch path starting at $x$. The claim will then be used to show that these stretch paths vary smoothly from point to point, giving the desired homotopy.\\

Let $\{U_{i}\}$ be an open cover of $D^{\delta}$ such that, for every $U_{i}$, there is a unique smallest dimensional block $b_{i}$ intersected by $U_{i}$, and $U_{i}$ only intersects blocks incident on $b_{i}$. It will also be assumed that the open cover contains open sets contained in top dimensional blocks. The multicurve labelling the block $b_{i}$ of $U_{i}$ will be denoted by $m_{i}$. Let $\{(\phi_{i}, U_{i})\}$ be a smooth partition of unity of $D^{\delta}$, and let $\chi(x)$ be a smooth function taking the value 1 everywhere on $\mathcal{T}_{g}^{\delta'}$, and zero on the closure of the $\delta$-thin part of $\mathcal{T}_{g}$. The partition of unity $(\phi_{i}, U_{i}), i\in \mathcal{I}$ is used to obtain a weighted multicurve
\begin{equation*}
m(x):=\sum_{i\in \mathcal{I}} \chi(x)\phi_{i}(x)m_{i}
\end{equation*}
The weighted multicurve $m(x)$ determines a stretch path $\gamma_{m(x)}:[0,1)\rightarrow \mathcal{T}_{g}$ with $\gamma_{m(x)}(0)=x$, where the curves in $m(x)$ are shortened along the stretch path at a rate proportional to the weight of the curve. As these stretch paths vary smoothly with $x$, a homotopy $\psi_{t}$ is obtained, where $\psi_{t}$ takes a point $x$ in $D$ to the point $\gamma_{m(x)}(t)$.\\

Here is where use is made of the crucial assumption that the set of curves labelling simplices in the image of the horizon map do not span $H_{1}(S_{g};\mathbb{Q})$. This assumption ensures that every curve realised as a systole somewhere on $\partial D^{\delta}$ lifts to a systole in the pre-image of $D$ in $\mathcal{T}_{g(M)}$. The parameter $\delta$ was chosen small enough that for points near the boundary of $D^{\delta}$, the blocks are labelled by multicurves containing the systoles, so the homotopy decreases the systole function at points of $D^{\delta}$ near $\partial D^{\delta}$.\\

Since $D^{\delta}$ is compact, there is a $0<T<1$ such that for $t>T$, $\psi_{t}(D^{\delta})$ is in the $\delta'$-thin part of $\mathcal{T}_{g}$. It follows from \cite{Thurston} that $\mathcal{T}_{g}$ equivariantly deformation retracts onto $\mathcal{T}_{g}^{\delta'}$. If $\mathcal{C}^{\circ}_{g}$ is embedded in the boundary of the $\delta$-thick part of $\mathcal{T}_{g}$, this then gives a homotopy of $D^{\delta}$ onto the image of a subcomplex of $\mathcal{C}^{\circ}_{g}$ with boundary given by $h(D)$.
\end{proof}

It will now be explained how Lemma \ref{oneway} provides information about the subcomplexes obtained as the images of equivariant deformation retractions of $\mathcal{P}_{g}$.

\begin{theorem}[Theorem \ref{whichsubcomplex} of the Introduction.]
For every $g\geq 2$ there is a well-rounded deformation retraction of $\mathcal{T}_{g}$ onto a CW complex of dimension $4g-5$.
\end{theorem}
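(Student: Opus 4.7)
The plan is to use the existence of a $(4g-5)$-dimensional equivariant spine from \cite{Tspine} and further collapse it via Lemma \ref{oneway} until it becomes well-rounded, without dropping the dimension. I would first invoke the main result of \cite{Tspine} to obtain a $\Gamma_g$-equivariant deformation retraction of $\mathcal{T}_g$ onto a CW complex $K\subset\mathcal{P}^X_g$ with $\dim K=4g-5$. The goal becomes showing that $K$, after possibly performing additional equivariant collapses, can be made well-rounded while preserving this dimension.

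Next, for each $\Gamma_g$-orbit of locally top-dimensional cells of $K$, construct the dual $D$ as a union of Schmutz Schaller's sets of minima following Section \ref{dualitysec}, and inspect the set $h(D)_v$ of curves labelling $D$. If $h(D)_v$ spans $H_1(\mathcal{S}_g;\mathbb{Q})$ for every such orbit, then $K$ is already well-rounded and we are done. Otherwise, pick an orbit whose dual $D$ has non-spanning labels; by Lemma \ref{oneway}, $h(D)$ is a boundary in $\mathcal{C}^\circ_g$. Using the homotopy $\psi_t$ constructed in the proof of Lemma \ref{oneway}, which pushes $D^\delta$ into the thin part of $\mathcal{T}_g$ along stretch paths prescribed by the weighted multicurves $m(x)$, extend the equivariant deformation retraction to collapse the $\Gamma_g$-orbit of the primal cell, applying the duality correspondence between $\mathcal{P}^X_g$-cells and sets of minima reviewed in Section \ref{dualitysec}.

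Iterate this collapsing procedure. Since $\mathcal{P}_g$ is $\Gamma_g$-cocompact, the orbit space of $K$ is finite, so finitely many iterations suffice to reach a complex $\mathcal{W}_g$ in which every locally top-dimensional cell has a spanning dual. Remark \ref{subcomplex} ensures that the well-roundedness of cells not targeted at a given step is preserved: the duals of newly created locally top-dimensional cells contain the duals of the cells they replace, so any old spanning labels remain present in the new labels. The dimension of $\mathcal{W}_g$ is at most $4g-5$ because we only collapse, and at least $4g-5$ by Harer's virtual cohomological dimension lower bound \cite{Harer}; hence $\dim\mathcal{W}_g=4g-5$, and $\mathcal{W}_g\subset \mathcal{P}^X_g\subset \mathcal{P}_g$ is well-rounded by construction.

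The principal obstacle I anticipate is translating the ``$h(D)$ is a boundary'' conclusion of Lemma \ref{oneway} into a $\Gamma_g$-equivariant further deformation retraction of the primal complex $K$ that collapses the selected orbit. Lemma \ref{oneway}'s proof delivers a homotopy of the dual $D$ into a subcomplex of $\mathcal{C}^\circ_g$ bounded by $h(D)$, whereas what is needed is a retraction on the primal side. The duality correspondence of Section \ref{dualitysec} supplies the bridge, but one must check that all ancillary choices in the Lemma's construction — the open cover $\{U_i\}$, the partition of unity $\{(\phi_i,U_i)\}$, the cutoff $\chi$, and the orientation of the curves in $h(D)_v$ — can be made $\Gamma_g$-equivariantly and compatibly across different orbits, and that the resulting primal collapses are consistent with those performed in earlier iterations. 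A subtler point is verifying that collapsing a non-well-rounded orbit of cells at dimension $4g-5$ does not violate the vcd lower bound: any such collapse must create replacement top-dimensional cells whose duals are strictly larger, a feature that must be cross-checked against the duality construction in Section \ref{dualitysec} rather than merely inferred from the iterative logic.
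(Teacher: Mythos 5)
Your proposal is correct and uses the same core machinery as the paper: Lemma \ref{oneway} detects locally top-dimensional cells whose duals have non-spanning labels, the resulting homotopy of the dual into the thin part shows the complex has nonempty boundary, an equivariant retraction is built from that boundary as in \cite{Tspine}, the process terminates because each step replaces an orbit of cells by orbits of strictly smaller dimension, and the vcd bound of \cite{Harer} pins the final dimension at $4g-5$.

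The one structural difference is the order of operations. You start from the $(4g-5)$-dimensional complex of \cite{Tspine} and then well-round it, arguing that collapsing cannot raise the dimension and vcd prevents it from dropping. The paper instead starts from all of $\mathcal{P}_g^X$, first iterates the Lemma \ref{oneway} collapse until the complex is well-rounded, and only then reduces dimension by a separate explicit argument: a cell of dimension greater than $4g-5$ has a dual $D'$ of dimension less than $2g-1$, whose intersection with $\partial\mathcal{T}_g^{\epsilon_M}\simeq\vee_i S^{2g-2}$ has dimension below $2g-2$ and is therefore nullhomotopic there, so $D'$ can be pushed out of the thick part and the cell collapsed; Remark \ref{subcomplex} then preserves well-roundedness. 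Your ordering lets you treat that dimension-reduction step as a black box from \cite{Tspine} (which is legitimate, since the paper itself describes its second phase as identical to that construction), at the cost of not exhibiting the wedge-of-spheres argument explicitly. The obstacles you flag --- equivariance of the partition-of-unity choices and the passage from a homotopy of the dual to a retraction of the primal complex --- are exactly the points the paper also defers to \cite{Tspine} and \cite{STD}, so they are not gaps relative to the paper's own standard of rigor.
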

\begin{proof}

This theorem is a consequence of Lemma \ref{oneway} and the construction in \cite{Tspine}. Suppose there is a locally top-dimensional cell with a dual $D$ for which $h(D)_{v}$ does not span $H_{1}(S_{g};\mathbb{Q})$. Lemma \ref{oneway} then shows that there is an homotopy of $D$ that fixes $D\cap \mathcal{T}_{g}^{\epsilon_{M}}$ and takes $D$ to an embedding of a ball disjoint from $\mathcal{P}_{g}^{X}$. This is used to show that $\mathcal{P}_{g}^{X}$ has nonempty boundary. As explained in detail in \cite{Tspine}, this boundary is used to construct a deformation retraction. This deformation retraction is equivariant, because it is defined in terms of level sets of the systole function, which is a $\Gamma_{g}$-equivariant function on $\mathcal{T}_{g}$.\\

This construction can be repeated on the resulting complex, and iterated until a complex $\mathcal{W}_{g}$ that is the image of a well-rounded deformation retraction is obtained. Only finitely many iterations are possible, because each iteration replaces an orbit of cells by an orbit cells of smaller dimension, and there are only finitely many orbits of cells.\\

The dimension of $\mathcal{W}_{g}$ cannot be less than $4g-5$, because this is the virtual cohomological dimension of $\Gamma_{g}$, \cite{Harer}. If the dimension of $\mathcal{W}_{g}$ is larger than $4g-5$, a further deformation retraction is performed as outlined below, identical to the construction in \cite{Tspine}. It follows from Remark \ref{subcomplex} that the resulting deformation retraction is well-rounded.\\

Suppose $\mathcal{W}_{g}$ has a cell of dimension dimension greater than $4g-5$. By assumption, this cell has a dual $D'$ of dimension less than $2g-1$. It was shown in \cite{Harer} that $\partial \mathcal{T}_{g}^{\epsilon_{M}}$ is homotopy equivalent to a wedge of spheres $\vee_{i}^{\infty}S^{2g-2}$. Since the intersection of $D'$ with $\partial\mathcal{T}_{g}^{\epsilon_{M}}$ has dimension less than $2g-2$, it cannot represent a nontrivial homology class in $\partial\mathcal{T}_{g}^{\epsilon_{M}}$. This means that $D'$ can be homotoped relative to its boundary out of $\mathcal{T}_{g}^{\epsilon_{M}}$. As before, this implies the complex has nonempty boundary, and an equivariant deformation retraction can be constructed.
\end{proof}

\bibliography{Spinebib2}
\bibliographystyle{plain}
\end{document}